\newcommand{\fn}[1]{\mathrm{#1}}
\newcommand{\mdl}[1]{\mathcal{#1}}
\newcommand{\ph}{\varphi}
\newcommand{\NN}{\mathbb{N}}
\newcommand{\QQ}{\mathbb{Q}}
\newcommand{\st}{\; | \;}
\newcommand{\len}{\fn{length}}
\title{Uncomputably noisy ergodic limits}
\author{Jeremy Avigad}
\address{Departments of Philosophy and Mathematical Sciences\\
Carnegie Mellon University\\
Pittsburgh, PA 15213}
\subjclass[2010]{03F60, 37A25}
\thanks{Work partially supported by NSF grant DMS-1068829. I am grateful to an anonymous referee for comments and suggestions.}
\begin{document}

\begin{abstract}
V'yugin \cite{vyugin:97,vyugin:98} has shown that there are a computable shift-invariant measure on $2^{\mathbb N}$ and a simple function $f$ such that there is no computable bound on the rate of convergence of the ergodic averages $A_n f$. Here it is shown that in fact one can construct an example with the property that there is no computable bound on the complexity of the limit; that is, there is no computable bound on how complex a simple function needs to be to approximate the limit to within a given $\varepsilon$.
\end{abstract}

\maketitle

Let $2^\NN$ denote Cantor space,  the space of functions from $\NN$ to the discrete space $\{0, 1\}$ under the product topology. Viewing elements of this space as infinite sequences, for any finite sequence $\sigma$ of $0$'s and $1$'s let $[\sigma]$ denote the set of elements of $2^\NN$ that extend $\sigma$. The collection $\mdl B$ of Borel sets in the standard topology are generated by the set of such $[\sigma]$. For each $k$, let $\mdl B_k$ denote the finite $\sigma$-algebra generated by the partition $\{ [\sigma] \st \len(\sigma) = k \}$. If a function $f$ from $2^{\mathbb N}$ to $\mathbb Q$ is measurable with respect to $\mdl B_k$, I will call it a \emph{simple function} with \emph{complexity at most $k$}. 

Let $\mu$ be any probability measure on $(2^\NN, \mdl B)$, and let $f$ be any element of $L^1(\mu)$. Say that a function $k$ from $\mathbb Q^+$ to $\mathbb N$ is a \emph{bound on the complexity of $f$} if, for every $\varepsilon > 0$, there is a simple function $g$ of complexity at most $k(\varepsilon)$ such that $\| f - g \| < \varepsilon$. If $(f_n)$ is any convergent sequence of elements of $L^1(\mu)$ with limit $f$, say that $r(\varepsilon)$ is a \emph{bound on the rate of convergence of $(f_n)$} if for every $n \geq r(\varepsilon)$, $\| f_n - f \| < \varepsilon$. (One can also consider rates of convergence in any of the $L^p$ norms for $1 < p < \infty$, or in measure. Since all the sequences considered below are uniformly bounded, this does not affect the results below.)

Now suppose that $\mu$ is a computable measure on $2^\NN$ in the sense of computable measure theory \cite{hoyrup:rojas:09b,weihrauch:99}. Then if $f$ is any computable element of $L^1(\mu)$, there is a computable sequence $(f_n)$ of simple functions that approaches $f$ with a computable rate of convergence $r(\varepsilon)$; this is essentially what it \emph{means} to be a computable element of $L^1(\mu)$. In particular, setting $k(\varepsilon)$ equal to the complexity of $f_{r(\varepsilon)}$ provides a computable bound on the complexity of $f$. But the converse need not hold: if $r$ is any noncomputable real number and $f$ is the constant function with value $r$, then $f$ is not computable even though there is a trivial bound on its complexity. 

It is not hard to compute a sequence of simple functions $(f_n)$ that converges to a function $f$ even in the $L^\infty$ norm with the property that there is no computable bound on the complexity of the limit, with respect to the standard coin-flipping measure on $2^\NN$. Notice that this is stronger than saying that there is no computable bound on the rate of convergence of $(f_n)$ to $f$; it says that there is no way of computing bounds on the complexity of \emph{any} sequence of good approximations to $f$. 

To describe such a sequence, for each $k$, let $h_k$ be the $\mdl B_k$-measurable Rademacher function defined by
\[
h_k = \sum_{\{ \sigma \st \len(\sigma) = k \}} (-1)^{\sigma_{k-1}} 1_{[\sigma]},
\]
where $\sigma_{k-1}$ denotes the last bit of $\sigma$ and $1_{[\sigma]}$ denotes the characteristic function of the cylinder set $[\sigma]$. Intuitively, $h_k$ is a ``noisy'' function of complexity $k$. Finally, let $f_n = \sum_{i \leq n} 4^{-\ph(i)} h_i$, where $\ph$ is an injective enumeration of any computably enumerable set, like the halting problem, that is not computable. Given any $m$, if $n$ is large enough so that $\ph(j) > m$ whenever $j > n$, then for every $i > n$ and every $x$ we have $| f_i(x) - f_n(x) | \leq \sum_{j \geq m} 4^{-j} < 1 / (3 \cdot 4^m)$. Thus the sequence $(f_n)$ converges in the $L^\infty$ norm. At the same time, it is not hard to verify that if $f$ is the $L^1$ limit of this sequence and $g$ is a simple function of complexity at at most $n$ such that $\mu(\{ x \st |g(x) - f(x)|> 4^{-(m+1)} \}) < 1/2$, then $m$ is in the range of $\ph$ if and only if $\ph(j) = n$ for some $j < n$. Thus one can compute the range of $\ph$ from any bound on the complexity of $f$.

The sequence $(f_n)$ just constructed is contrived, and one can ask whether similar sequences arise ``in nature.'' Letting $A_n f$ denote the ergodic average $\frac{1}{n} \sum_{i < n} f \circ T_n$, the mean ergodic theorem implies that for every measure $\mu$ on $2^\NN$ and $f$ in $L^1(\mu)$, the sequence $(A_n f)$ converges in the $L^1$ norm. However, V'yugin \cite{vyugin:97,vyugin:98} has shown that there is a computable shift-invariant measure $\mu$ on Cantor space such that there is no computable bound on the rate of convergence of $(A_n 1_{[1]})$.
In V'yugin's construction, the limit doesn't have the property described in the last paragraph; in fact, it is very easy to bound the complexity of the limit in question, which places a noncomputable mass on the string of $0$'s and the string of $1$'s, and is otherwise homogeneous. The next theorem shows, however, that there are computable measures $\mu$ such that the limit does have this stronger property. 

\newtheorem*{theorem}{Theorem}

\begin{theorem}
 There is a computable shift-invariant measure $\mu$ on $2^\NN$ such that if $f = \lim_n A_n 1_{[1]}$, the halting problem can be computed from any bound on the complexity of $f$.
\end{theorem}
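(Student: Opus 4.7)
The plan is to adapt the construction of $(f_n)$ from the introduction --- whose limit $\sum_i 4^{-\ph(i)} h_i$ already has the sought-after complexity-resistance --- to the ergodic setting. The main difficulty is that the ergodic limit $f = \lim_n A_n 1_{[1]}$ is shift-invariant, hence constant on each ergodic component of $\mu$; the Rademacher-like variation must therefore be encoded in the distribution of ergodic components rather than by pointwise fluctuations inside any single one of them.

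Fix an injective computable enumeration $\ph$ of the halting problem $H$, chosen (as discussed below) to grow rapidly. For each even $n \geq 4$, let $w_n^\pm \in \{0,1\}^n$ be the strings $1^{n/2 \pm 1} 0^{n/2 \mp 1}$ and let $\nu_n^\pm$ be the uniform probability measure on the shift-orbit of the periodic sequence $(w_n^\pm)^\infty$. Each $\nu_n^\pm$ is a shift-invariant ergodic measure supported on $n$ points with $\nu_n^\pm([1]) = \tfrac{1}{2} \pm 1/n$, and the cyclic shifts of $w_n^+$ and $w_n^-$ form disjoint subsets of $\{0,1\}^n$ (they have different $1$-counts), so at length $n$ the two orbits visit disjoint cylinders. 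Setting $\epsilon_n = 1/n$ and choosing computable weights $c_n$ of order $n \cdot 4^{-\ph(n)}$ so that $c_n \epsilon_n \asymp 4^{-\ph(n)}$, define
\[
\mu \;=\; \Bigl(1 - \sum_n c_n\Bigr) \mu_{\fn{Bern}} \;+\; \sum_n c_n \cdot \tfrac{1}{2}\bigl(\nu_n^+ + \nu_n^-\bigr),
\]
where $\mu_{\fn{Bern}}$ is the fair-coin Bernoulli measure. Uniform computability of the $\nu_n^\pm$ and summability of the weights make $\mu$ a computable shift-invariant probability measure. On each ergodic component $f$ equals the density of $1$'s there: $f = \tfrac{1}{2} \pm 1/n$ on the orbit of $w_n^\pm$ and $f = \tfrac{1}{2}$ on the Bernoulli support.

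The key estimate bounds the $L^1$-error of any $\mdl{B}_k$-measurable simple function $g$ from below. A pair-by-pair analysis shows: the periodic orbits underlying $\nu_n^\pm$ differ by two bit flips within a period of length $n$, so their length-$k$ cylinder distributions differ in total variation by only $O(k/n)$; hence the pair's contribution to $\|f - g\|_1$ is at least $c_n \epsilon_n (1 - O(k/n))$, which for $k \leq n/2$ is a constant multiple of $4^{-\ph(n)}$. Summing over $n \geq 2k$ yields $\|f - g\|_1 \gtrsim \sum_{n \geq 2k} 4^{-\ph(n)}$. If $\|f - g\|_1 < c' \cdot 4^{-m-1}$ for an appropriate constant $c'$, then every $n \geq 2k$ with $\ph(n) \leq m$ is excluded: every $m' \leq m$ in $H$ is enumerated by stage $2k$. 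As in the introduction, we then decide $m \in H$ from any bound $k(\varepsilon)$ by checking whether $m$ appears among $\ph(0), \ldots, \ph(2 k(c' \cdot 4^{-m-1}))$.

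The main technical obstacles are (i) the quantitative total-variation estimate between the cylinder distributions of the two periodic orbits at each level $k$, which requires a careful combinatorial analysis to pin down the implicit constants uniformly in $n$, and (ii) arranging that $\sum c_n \asymp \sum_n n \cdot 4^{-\ph(n)}$ converges. For (ii), one may preprocess $\ph$ by delaying the enumeration of each $m \in H$ until at least stage $2^{m/2}$, ensuring $\ph(n) \geq 2 \log_2 n$ for large $n$ and thus $\sum_n n \cdot 4^{-\ph(n)} \leq \sum_n 1/n^3 < \infty$; this delay is itself a computable transformation, since at each stage one needs only wait until an element of $H$ of the required size has been enumerated.
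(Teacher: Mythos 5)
Your reduction and the lower-bound estimate are fine (the observation that the level-$k$ cylinder distributions of the two orbits differ in total variation by only $O(1/n)$, so that any $\mdl B_k$-measurable $g$ incurs error at least about $c_n\epsilon_n$ from the pair, is correct and is a clean way to run the argument). The gap is in the weights. You need $c_n \asymp n\cdot 4^{-\ph(n)}$ so that $c_n\epsilon_n \asymp 4^{-\ph(n)}$, but the series $\sum_n n\cdot 4^{-\ph(n)}$ \emph{cannot} converge for any enumeration $\ph$ of a noncomputable set: convergence forces $n\cdot 4^{-\ph(n)} < 1$ for all large $n$, i.e.\ every sufficiently large $m \in H$ is enumerated by stage $4^m$, which makes $H$ decidable. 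Your proposed repair in obstacle (ii) goes the wrong way: delaying the enumeration of $m$ until stage at least $2^{m/2}$ guarantees $n \geq 2^{\ph(n)/2}$, i.e.\ $\ph(n) \leq 2\log_2 n$, which is an \emph{upper} bound on $\ph(n)$ and makes the terms larger, not smaller; the inequality $\ph(n) \geq 2\log_2 n$ that you actually need would mean every $m\in H$ appears by stage $2^{m/2}$, again deciding $H$. So the measure as defined does not exist.

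The root cause is the $1/n$ density gap between $\nu_n^+$ and $\nu_n^-$, which forces the extra factor of $n$ in the weights. The paper avoids this by using orbit pairs whose densities of $1$'s differ by a \emph{constant} --- the shifts of $(1^s0^{3s})^*$ and of $(1^{3s}0^s)^*$, with $f$-values $1/4$ and $3/4$ --- so that a weight of order $4^{-m}$ (or $2^{-(e+1)}$, one summand per Turing machine) already suffices, and summability is immediate. The price is that these two orbits are \emph{not} close in total variation at level $k$; instead one argues that for $k \ll s$ most of the mass of both orbits sits in $[0^k]\cup[1^k]$, where a $\mdl B_k$-measurable $g$ is constant but $f$ takes both values $1/4$ and $3/4$ with comparable mass. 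Note also that in the paper computability of $\mu$ does not come from a priori summability of enumeration-indexed weights but from the fact that, at cylinder resolution $k$, the long-period orbit pair is within $O(k/s)$ of the measure $\frac12(\delta_{0^*}+\delta_{1^*})$ used in the non-halting case, so one only needs to simulate the machine for $O(k/\varepsilon)$ steps. If you replace your $w_n^{\pm}$ by the paper's constant-gap orbit pairs, keep weight $4^{-m}$ for the component associated with $m$, and let that component default to $\frac12(\delta_{0^*}+\delta_{1^*})$ when $m\notin H$, your decoding argument goes through essentially unchanged.
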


\begin{proof}
If $\sigma$ is any finite binary sequence, let $\sigma^*$ denote the element $\sigma\sigma\sigma\ldots$ of Cantor space. For each $e$, define a measure $\mu_e$ as follows: if Turing machine $e$ halts in $s$ steps, let $\mu_e$ put mass uniformly on these $8s$ elements:
\begin{itemize}
 \item all $4s$ shifts of $(1^s0^{3s})^*$
 \item all $4s$ shifts of $(1^{3s}0^s)^*$
\end{itemize}
Otherwise, let $\mu_e$ divide mass uniformly between $0^*$ and $1^*$. Each measure $\mu_e$ is shift invariant, by construction. I will show, first, that $\mu_e$ is computable uniformly in $e$, which is to say, there is a single algorithm that, given $e$, $\sigma$, and $\varepsilon > 0$, computes $\mu_e([\sigma])$ to with $\varepsilon$. I will then show that information as to the complexity needed to approximate $f$ in $(2^\omega, \mdl B, \mu_e)$ allows one to determine whether or not Turing machine $e$ halts. The desired conclusion is then obtained by defining $\mu = \sum_e 2^{-(e+1)} \mu_e$.

If Turing machine $e$ does not halt, $\mu_e([\sigma]) = 1/2$ if $\sigma$ is a string of $0$'s or a string of $1$'s, and $\mu_e([\sigma]) = 0$ otherwise. Suppose, on the other hand, that Turing machine $e$ halts in $s$ steps, and suppose $k < s$. Then there are $2(k-1)$ additional strings $\sigma$ with length $k$ such that $\mu_e([\sigma]) > 0$, each consisting of a string of 1's followed by a string of 0's or vice versa. For each of these $\sigma$, $\mu_e([\sigma]) = 1 / 4s$, and if $\sigma$ is a string of $0$'s or a string of $1$'s of length $k$, $\mu_e([\sigma]) = 1/2 - (k-1)/4s$. Thus when $s$ is large compared to $k$, the non-halting case provides a good approximation to $\mu_e([\sigma])$ when $\len(\sigma) \leq k$, even though $e$ eventually halts. Thus, to compute $\mu_e([\sigma])$ to within $\varepsilon$, it suffices to simulate the $e$th Turing machine $O(k/\varepsilon)$ steps. If it halts before then, that determines $\mu_e$ exactly; otherwise, the non-halting approximation is close enough.

Now consider $f = \lim_n A_n 1_{[1]}$ in $(2^\omega, \mdl B, \mu_e)$. Note that $(A_n 1_{[1]})(\omega)$ counts the density of $1$'s among the first $n$ bits of $\omega$. If Turing machine $e$ does not halt, $f(\omega) = 1$ if $\omega$ is the sequence of 1's, and $f(\omega) = 0$ if $\omega$ is the sequence of 0's. Up to a.e.~equivalence, these are all that matters, since the mass concentrates on these two elements of Cantor space. If Turing machine $e$ halts in $s$ steps, then $f(\omega) = 1/4$ on the shifts of $(1^s0^{3s})^*$, and $f(\omega) = 3/4$ on the shifts of $(1^{3s}0^s)^*$.

Suppose $g$ is $\mdl B_k$-measurable. If Turing machine $e$ halts in $s$ steps and $k$ is much less than $s$, then roughly $3/4$ of the shifts of $(1^s0^{3s})^*$ lie in $[0^k]$ and roughly $1/4$ lie in $[1^k]$; and roughly $3/4$ of the shifts of $(1^{3s}0^s)^*$ lie in $[1^k]$ and roughly $1/4$ lie in $0^k$. But $f(\omega)$ only takes on the values $1/4$ and $3/4$, and $g$ is constant on $[0^k]$ and $[1^k]$. So if $k$ is much less than $s$, $\mu_e (\{ \omega \st | f(\omega) - g(\omega) | > 1/8 \}) > 1 / 4$. Turning this around, given the information that $\mu_e (\{ \omega \st | f(\omega) - g(\omega) | > 1/8 \}) \leq 1 / 4$ for some $g$ of complexity at most $k$ enables one to determine whether or not Turing machine $e$ halts; namely, one simulates the Turing machine for $O(k)$ steps, and if it hasn't halted by then, it never will.

Set $\mu = \sum_e 2^{-(e+1)} \mu_e$. Since, for any $g$,  
\[
\mu_e (\{ \omega \st | f(\omega) - g(\omega) | > 1/8 \}) \leq \mu (\{ \omega \st | f(\omega) - g(\omega) | > 1/(8 \cdot 2^{e+1}) \}),
\]
knowing a $k_e$ for each $e$ with the property that $\mu (\{ \omega \st | f(\omega) - g(\omega) | > 1/(8 \cdot 2^{e+1}) \} < 1 / 4$ for some $g$ of complexity at most $k_e$ enables one to solve the halting problem. But such a $k_e$ can be obtained from a bound on the complexity of $f$. Thus $\mu$ satisfies the statement of the theorem.
\end{proof}

The proof above relativizes, so for any set $X$ there is a measure $\mu$ on $2^\NN$, computable from $X$, such that no bound on the rate of complexity of $f$ can be computed from $X$. As the following corollary shows, this implies that $\lim_n A_n 1_{[1]}$ can have arbitrarily high complexity.

\newtheorem*{corollary}{Corollary}

\begin{corollary}
  For any $v : \QQ^+ \to \NN$ there is a measure $\mu$ on $2^\NN$ such that if $f = \lim_n A_n 1_{[1]}$ and $k(\varepsilon)$ is a bound on the complexity of $f$, then $\limsup_{\varepsilon \to 0} k(\varepsilon) / v(\varepsilon) = \infty$. 
\end{corollary}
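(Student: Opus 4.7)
The plan is to apply the relativization of the theorem, which the author has already noted: for any set $X$, there is a measure $\mu$ on $2^\NN$, computable from $X$, such that no $X$-computable function is a bound on the complexity of $f = \lim_n A_n 1_{[1]}$. Given $v : \QQ^+ \to \NN$, I would take $X$ to be any set that computes $v$ and let $\mu$ be the measure supplied by the relativized theorem for this $X$.

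The core of the argument is then a contradiction: suppose some bound $k$ on the complexity of $f$ satisfies $\limsup_{\varepsilon \to 0} k(\varepsilon)/v(\varepsilon) < \infty$; I will manufacture from it an $X$-computable bound. First I would replace $k$ by its monotone envelope $\tilde k(\varepsilon) := \min\{k(\varepsilon') : \varepsilon' \in \QQ^+,\ \varepsilon' \leq \varepsilon\}$, which is still a bound (any $g$ witnessing $\|f-g\| < \varepsilon'$ also witnesses $\|f-g\| < \varepsilon$ when $\varepsilon' \leq \varepsilon$), is non-increasing in $\varepsilon$, and satisfies $\tilde k \leq k$ pointwise, so it inherits the $\limsup$ hypothesis. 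Next I would choose $C \in \NN$ and $\varepsilon_0 \in \QQ^+$ so that $\tilde k(\varepsilon) \leq C v(\varepsilon)$ for every $\varepsilon < \varepsilon_0$, set $M := \tilde k(\varepsilon_0)$, and define $k''(\varepsilon) := C v(\varepsilon) + M$. Monotonicity gives $\tilde k(\varepsilon) \leq M$ for $\varepsilon \geq \varepsilon_0$, so $k'' \geq \tilde k$ pointwise everywhere. Since $\mdl B_j \subseteq \mdl B_k$ for $j \leq k$, any function pointwise above a complexity bound is itself a complexity bound, so $k''$ is one. But $k''$ is computable from $v$, hence from $X$, contradicting the defining property of $\mu$.

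I do not anticipate a serious obstacle. The corollary asks only for the existence of $\mu$, not its computability, so beyond invoking the relativization the only substantive step is the monotonization, which converts the asymptotic $\limsup$ hypothesis into a pointwise inequality valid on all of $\QQ^+$. One minor point worth recording in the written proof is that a finite $\limsup$ forces $v(\varepsilon) > 0$ for all sufficiently small $\varepsilon$ (otherwise the ratio is already infinite), so the arithmetic in the choice of $C$ makes sense.
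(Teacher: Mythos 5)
Your proposal is correct and follows essentially the same route as the paper: invoke the relativized theorem for a set computing $v$, then show that a finite $\limsup$ would yield a $v$-computable complexity bound (the paper uses $k'(\varepsilon) = N\cdot v(\min(\varepsilon,\varepsilon'))$ where you use the monotone envelope plus the additive constant $M$, but these are the same idea). Your version is slightly more careful about the region $\varepsilon \geq \varepsilon_0$; no substantive difference.
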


\begin{proof}
  Let $\mu$ be such that no bound on the complexity of $f$ can be computed from $v$. If the conclusion failed for some $k$, then there would be a rational $\varepsilon' > 0$ and $N$ such that for every $\varepsilon < \varepsilon'$, $k(\varepsilon) < N \cdot v(\varepsilon)$. But then $k'(\varepsilon) = N \cdot v(\min(\varepsilon, \varepsilon'))$ would be a bound on the complexity of $f$ that is computable from $v$, contrary to our choice of $\mu$.
\end{proof}


\end{document}